\documentclass[11pt]{amsart} 
\usepackage[a4paper,margin=2.5cm]{geometry}
\usepackage{amsmath,amssymb,amsthm,enumitem}
\usepackage{xcolor}
\usepackage[
    colorlinks=true,
    linkcolor=blue,
    citecolor=blue,
    urlcolor=blue!60
]{hyperref}

\IfFileExists{newtxtext.sty}{\usepackage{newtxtext}}{}
\IfFileExists{newtxmath.sty}{\usepackage{newtxmath}}{}

\newtheorem*{theorem*}{Theorem}
\newtheorem{theorem}{Theorem}
\newtheorem{proposition}[theorem]{Proposition}
\newtheorem{lemma}[theorem]{Lemma}
\newtheorem{corollary}[theorem]{Corollary}

\theoremstyle{remark}
\newtheorem{remark}[theorem]{Remark}

\setlist[enumerate,1]{
  label=\textup{(\roman*)},
  align=left,
  leftmargin=0.5em,
  labelsep=0.5em,
  itemsep=2pt,
  topsep=2pt,
  partopsep=0pt,
  parsep=0pt
}

\renewcommand{\pmod}[1]{\allowbreak\mkern4mu(\mathrm{mod}\,#1)}
\DeclareMathOperator{\SubF}{SubF}

\title{Subindices of two-element subsets of finite groups}

\author{M.~H.~Hooshmand}
\address{Department of Mathematics, Shi.C.,
Islamic Azad University, Shiraz, Iran}
\email{MH.Hooshmand@iau.ac.ir, hadi.hooshmand@gmail.com}

\author{Mikhail Kabenyuk}
\address{Kemerovo State University, Russia}
\email{kabenyuk@gmail.com, kabenyuk@kemsu.ru}

\subjclass[2020]{Primary 20D60; Secondary 20E34}
\keywords{finite groups, subfactors of groups, subindices of subsets, index stability, $2$-index stability}

\date{}

\begin{document}

\begin{abstract}
Let $A$ be a nonempty subset of a group $G$.
A \emph{right subfactor} relative to $A$ is a subset $B$
maximal under inclusion among those for which every element
of $AB$ has a unique representation as $ab$, with $a\in A$
and $b\in B$.
Left subfactors are defined analogously using $BA$.
The infima and suprema of the cardinalities of right and left
subfactors define four \emph{subindices}.
A subset is called \emph{index-stable} if its four subindices coincide.

For any infinite group $G$, every nonempty finite subset of $G$ is index-stable.

For finite groups, the situation is more complicated.
We prove that for a two-element subset of a finite group,
the left and right subindices coincide pairwise.
Let $A=\{x,y\}$ with $x\ne y$, and let $X$ be the cyclic subgroup
generated by $x^{-1}y$.
Then the lower subindex of $A$ is $|G:X|\cdot\lceil |X|/3\rceil$,
and the upper subindex is $|G:X|\cdot\lfloor |X|/2\rfloor$.
These formulas imply that $A$ is index-stable if and only if
$|X|$ is one of $2,3,4,5,7$.

For any two-element subset of a finite group $G$, the lower
subindex is at least $\lceil |G|/3\rceil$, and the upper subindex
is at most $\lfloor |G|/2\rfloor$.
We establish necessary and sufficient conditions for equality
in each of these bounds.

All two-element subsets of a finite group are index-stable
if and only if every element of the group has order
$1$, $2$, $3$, $4$, $5$, or $7$.
We give a complete structural classification of such groups.
\end{abstract}

\maketitle

\section{Introduction}
Factorizations of a group into direct products of two subsets form a classical
topic in group theory, with several related subjects and applications (see,
e.g., \cite{Hoo-subindex, Stein}). A challenging open question concerning the
factorization of finite groups by subsets was posed in 2014 and later recorded
as Problems~19.35 and~20.37 of the Kourovka Notebook \cite{Kouk-Mazuk}. Every
subgroup is a factor in this sense, but there are factors that are not
subgroups. A subsemigroup is a factor if and only if it is a subgroup (see
\cite{Hoo-finite}), and a subset whose size does not divide $|G|$ can never be a
factor of a finite group $G$. It is therefore natural to ask whether the notion
of index can be extended from subgroups to factors, or more generally to
arbitrary subsets of a group.

In \cite{Hoo-subindex}, the first author introduced,
for an arbitrary subset $A$ of a group $G$, the notion of a \emph{right subfactor} relative to $A$:
a subset $B\subseteq G$ maximal under inclusion subject to the product $AB$ being direct.
The infimum and supremum of the cardinalities of right subfactors
define the \emph{lower and upper right subindices} $|G:A|^-$ and $|G:A|^+$, respectively.

The \emph{left subfactors} and the corresponding \emph{lower and upper left subindices} $|G:A|_-$ and $|G:A|_+$
are defined analogously, with $BA$ in place of $AB$.
Since $|G:A|_{\pm}=|G:A^{-1}|^{\pm}$ and $|A^{-1}|=|A|$,
it suffices to study the right subindices.
All four subindices equal the classical index when $A$ is a subgroup.
A subset is called \emph{index-stable} if its four subindices coincide,
and a group is index-stable if all of its subsets are.

Alternative terminology is used in \cite{Kab-finite,Kab-infinite}:
\emph{s-factor} instead of \emph{subfactor}, \emph{index}
instead of \emph{subindex}, and \emph{stable}
instead of \emph{index-stable}.

Besides its group-theoretic content, the
theory turned out to have unexpected connections with additive combinatorics and
number theory; for example, the classical conjecture that every even number is a difference of
two primes is equivalent to the statement that the set of primes is
index-stable in $\mathbb Z$ with index $2$ \cite{Hoo-subindex, Hoo-infinite}.

The case of finite groups was developed in \cite{Hoo-finite}, where sharp
bounds for the subindices were obtained, a first algorithm for finding
subfactors was given, and it was shown that every prime divisor of the order of a finite
index-stable group is one of $2$, $3$, $5$, or $7$.
The central problem, the classification of
finite index-stable groups, was solved in \cite{Hoo-Yousef}:
with the help of a computer-assisted analysis of small groups
combined with theoretical arguments, they proved that, up to isomorphism, there
are exactly $14$ finite index-stable groups.

A different view was introduced by the second author \cite{Kab-finite}.
He observed that the right subfactors relative to $A$ are precisely the maximal independent
sets of the Cayley graph $\operatorname{Cay}(G,A^{-1}A\setminus\{1\})$. Hence
the subindices $|G:A|^-$ and $|G:A|^+$ are the independent domination number and
the independence number of this graph, and the right index stability of $A$
means exactly that the graph is \emph{well-covered}. This translation allowed
him to give a proof of the classification of the $14$ groups that does not use
a computer.

For infinite groups the situation is completely different: no infinite group
is index-stable, as was proved independently in \cite{Kab-infinite} and \cite{Hoo-infinite};
this answers Problem~21.58 of the Kourovka Notebook \cite{Kouk-Mazuk}.
Finally, for cyclic groups, the exact subindices of intervals were determined in \cite{Hoo-Zn}.
The same work shows that if a finite group $G$ is right
$k$-index-stable for $k\ge4$, then every prime divisor
$p$ of $|G|$ satisfies $p\le3k-1$. For $k=2$
(resp. $k=3$), the prime divisors of $|G|$
belong to $\{2,3,5,7\}$ (resp. $\{2,3,5,7,11\}$).

Since index-stable groups are so rare, it is natural to weaken the condition.
A finite group $G$ is called \emph{$k$-index-stable} if every $k$-subset of $G$
is index-stable. Every group is $1$-index-stable, because the subindices of a
singleton all equal $|G|$, and the index-stable groups are those that are
$k$-index-stable for all relevant $k$. The classification of finite
$k$-index-stable groups is therefore a natural intermediate problem, and it is
open in general. The first nontrivial case is $k=2$, and this is the subject of
the present paper. Subsets with two elements are the smallest subsets for which
the subindices are nontrivial, and they turn out to be completely computable.

Indeed, if $A=\{a,b\}$, then $A=a\{1,x\}$ with $x=a^{-1}b$, so, by the reduction
to a subgroup, all four subindices of $A$ are determined by the cyclic subgroup
$\langle x\rangle\cong\mathbb Z_m$, where $m=\operatorname{ord}(x)$. They equal
$\frac{|G|}{m}\lceil m/3\rceil$ and $\frac{|G|}{m}\lfloor m/2\rfloor$ (the
independent domination number and the independence number of the cycle graph
$C_m$ are $\lceil m/3\rceil$ and $\lfloor m/2\rfloor$), and the left and right
subindices coincide. In particular, $\{a,b\}$ is index-stable if and only if
$\operatorname{ord}(a^{-1}b)\in\{2,3,4,5,7\}$, and consequently a finite group
is $2$-index-stable if and only if all of its element orders lie in
$\{1,2,3,4,5,7\}$. Such groups have all elements of prime-power order; the
solvable ones are governed by Higman's theorem \cite{Higman}, and the
nonsolvable ones by the Cameron--Maslova classification of groups with
edgeless Gruenberg--Kegel graph \cite{Cameron-Maslova}. Combining these results
gives a complete structural classification of finite $2$-index-stable groups.

The paper is organized as follows. In Section~2 we recall the basic notions,
the general bounds, and the reduction to a subgroup. In Section~3 we derive the
exact subindices of $2$-subsets and characterize prescribed upper subindices.
Section~4 characterizes index stability of $2$-subsets. In Section~5 we
determine when a $2$-subset attains the universal bounds, and when it is
strongly index-unstable, including the cyclic case. Section~6 contains the
classification of finite $2$-index-stable groups.

\section{Preliminaries}

Let $G$ be a group and let $A,B\subseteq G$ be nonempty subsets. We say that
the product $AB$ is \emph{direct}, and write $AB=A\cdot B$, if every element
of $AB$ has a unique representation in the form $ab$, where $a\in A$ and
$b\in B$. Equivalently,
\[
A^{-1}A\cap BB^{-1}=\{1\}.
\]

A \emph{right subfactor} of $G$ relative to $A$ is a subset $B\subseteq G$
which is maximal with respect to inclusion among the subsets satisfying
$AB=A\cdot B$. The family of all right subfactors of $G$ relative to $A$ is
denoted by $\SubF_r(G:A)$. Left subfactors and the family
$\SubF_\ell(G:A)$ are defined analogously.

For finite $G$, the four subindices of $A$ are
\[
|G:A|^-=\min\{|B|:B\in\SubF_r(G:A)\},
\quad
|G:A|^+=\max\{|B|:B\in\SubF_r(G:A)\},
\]
and
\[
|G:A|_-=\min\{|B|:B\in\SubF_\ell(G:A)\},
\quad
|G:A|_+=\max\{|B|:B\in\SubF_\ell(G:A)\}.
\]

\begin{remark}
For infinite $G$, the four subindices are defined analogously,
using infima and suprema instead of minima and maxima.
Every subset $A\subseteq G$ with $|A|<|G|$ then has all four
subindices equal to $|G|$ (see \cite{Hoo-infinite}).
In particular, this holds for every two-element subset of $G$.

Henceforth, all groups under consideration are assumed to be finite,
and all subsets under consideration are assumed to be nonempty.
\end{remark}

We say that $A$ is \emph{right index-stable} if
\[
|G:A|^-=|G:A|^+,
\]
and \emph{left index-stable} if
\[
|G:A|_-=|G:A|_+.
\]
The subset $A$ is \emph{index-stable} if all four subindices are equal.

\subsection{General bounds}

For every nonempty subset $A$ of a finite group $G$,
\[
\left\lceil\frac{|G|}{|A^{-1}A|}\right\rceil
\le |G:A|^-\le |G:A|^+
\le\left\lfloor\frac{|G|}{|A|}\right\rfloor.
\]
We refer to the first and last terms in these inequalities as the
\emph{lower bound} and the \emph{upper bound}, respectively
\cite{Hoo-finite}.

\subsection{Reduction to a subgroup}

The four subindices are invariant under translations of $A$. In particular,
for $g,h\in G$, the corresponding subindices of $A$ and $gAh$ are equal
\cite{Hoo-subindex, Hoo-infinite}.

Let $H\le G$ and let $A\subseteq H$. Then the subindices of $A$ in $G$ are
obtained from those in $H$ by multiplication by the index $|G:H|$ (see \cite{Hoo-infinite, Kab-finite}):
\[
|G:A|^\pm=|G:H||H:A|^\pm.
\]

This reduction will be used repeatedly for $2$-subsets. Indeed, if
$A=\{a,b\}$ and
\[
x=a^{-1}b,
\]
then
\[
a^{-1}A=\{1,x\}\subseteq\langle x\rangle.
\]
Thus the computation of the four subindices of $A$ reduces to the
corresponding computation for the $2$-subset $\{1,x\}$ of the cyclic
subgroup $\langle x\rangle$.

\section{Exact subindices of 2-subsets}

Throughout this section, let $G$ be a finite group of order $n$, and let
$A=\{a,b\}$, where $a\ne b$. Put
\[
x=a^{-1}b,\quad m=\operatorname{ord}(x),\quad
H=\langle x\rangle,\quad q=|G:H|=\frac{n}{m}.
\]

\subsection{Reduction to a cyclic subgroup}

Since
\[
A=a\{1,x\},
\]
translation invariance and the subgroup reduction give
\[
|G:A|^\pm=q|H:\{1,x\}|^\pm.
\]
The subgroup $H$ is cyclic of order $m$, and under the natural identification
$H\cong\mathbb Z_m$, the subset $\{1,x\}$ corresponds to $\{0,1\}$.
Hence
\[
|H:\{1,x\}|^-=\left\lceil\frac{m}{3}\right\rceil,
\quad
|H:\{1,x\}|^+=\left\lfloor\frac{m}{2}\right\rfloor.
\]

This is the case $A=\{0,1\}$ in $\mathbb Z_m$; see \cite{Hoo-Yousef, Kab-finite,
Hoo-Zn}. (For $m=2$ we have $\{1,x\}=H$, and both values equal $1$ trivially.)

For the left subindices, put
\[
y=ab^{-1}.
\]
Then
\[
Ab^{-1}=\{1,y\},
\]
and $y$ is conjugate to $x^{-1}$. In particular,
\[
\operatorname{ord}(y)=\operatorname{ord}(x)=m.
\]
Applying the same argument to the cyclic subgroup $\langle y\rangle$, we
obtain the same formulas for the left subindices.

\begin{theorem}\label{thm:exact-subindices}
Let $G$ be a finite group of order $n$, and let $A=\{a,b\}$ with $a\ne b$.
Put
\[
m=\operatorname{ord}(a^{-1}b),
\quad
q=\frac{n}{m}.
\]
Then
\[
|G:A|_-=|G:A|^-=q\left\lceil\frac{m}{3}\right\rceil,
\]
and
\[
|G:A|_+=|G:A|^+=q\left\lfloor\frac{m}{2}\right\rfloor.
\]
In particular, the four subindices of a $2$-subset are completely determined
by $n$ and $\operatorname{ord}(a^{-1}b)$.
\end{theorem}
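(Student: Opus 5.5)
The argument has three steps: reduce to a cyclic group, compute the two extremal sizes for the $2$-subset $\{0,1\}$ of $\mathbb Z_m$, and then treat the left subindices by symmetry.

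\textbf{Step 1: reduction.} By translation invariance, the subindices of $A$ equal those of $a^{-1}A=\{1,x\}$. By the subgroup reduction applied to $H=\langle x\rangle$, they equal $q$ times the subindices of $\{1,x\}$ in $H$. Identifying $H$ with $\mathbb Z_m$, it therefore suffices to show
\[
|\mathbb Z_m:\{0,1\}|^-=\lceil m/3\rceil,\qquad |\mathbb Z_m:\{0,1\}|^+=\lfloor m/2\rfloor .
\]
For $m=2$ both sides are $1$, since $\{0,1\}=\mathbb Z_2$.

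\textbf{Step 2: the cyclic case, $m\ge3$.} By the directness criterion $A^{-1}A\cap BB^{-1}=\{1\}$, the product $\{0,1\}+B$ is direct exactly when $B-B$ avoids $\pm1$. So $B$ is direct exactly when no two elements of $B$ are consecutive modulo $m$. Equivalently, $B$ is an independent set in the cycle graph $C_m$, and the right subfactors are the maximal independent sets of $C_m$.

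For the upper value, an independent set contains at most one vertex from each edge $\{i,i+1\}$. Summing over the $m$ edges gives $2|B|\le m$. The set of even residues $0,2,\dots$ below $m-1$ attains $\lfloor m/2\rfloor$ and is maximal.

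For the lower value, a maximal independent set dominates $C_m$: every vertex lies in $B$ or is adjacent to it. Each element of $B$ dominates at most $3$ vertices, so $3|B|\ge m$. The set $\{0,3,6,\dots,3(\lceil m/3\rceil-1)\}$ has consecutive gaps of $3$ and a wrap-around gap of $1$, $2$ or $3$. Hence no two of its elements are adjacent and every vertex is dominated, so it is maximal independent of size $\lceil m/3\rceil$. The only delicate point is checking this wrap-around gap for each residue of $m$ mod $3$, which is the main bookkeeping step and is routine.

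\textbf{Step 3: left subindices.} We have $|G:A|_\pm=|G:A^{-1}|^\pm$ and $A^{-1}=\{a^{-1},b^{-1}\}$. Applying the right-hand formula, the relevant element is $(a^{-1})^{-1}b^{-1}=ab^{-1}=y$. This $y$ is conjugate to $x^{-1}$ via $y=a(b^{-1}a)a^{-1}$, so $\operatorname{ord}(y)=m$. Alternatively, $Ab^{-1}=\{1,y\}$ and the left-handed version of Steps 1–2 applies. Either way, the left subindices are given by the same formulas with the same $q$ and $m$, which proves the theorem.
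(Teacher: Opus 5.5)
Your reduction (translating to $\{1,x\}$, passing to $H=\langle x\rangle$ with the factor $q$) and your treatment of the left subindices via $y=ab^{-1}=ax^{-1}a^{-1}$ are the paper's argument. The paper does not prove the cyclic values $\lceil m/3\rceil$ and $\lfloor m/2\rfloor$ for $\{0,1\}\subseteq\mathbb Z_m$; it cites them from the literature. Your attempt to derive them from maximal independent sets of $C_m$ is therefore extra. The bounds $2|B|\le m$ and $3|B|\ge m$ are correct, and so is the even-residue witness.

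However, the witness for the lower value fails in the case you called routine. If $m\equiv 1\pmod{3}$ and $m\ge4$, then $k=\lceil m/3\rceil=(m+2)/3$. The largest element of $\{0,3,\dots,3(k-1)\}$ is then $3(k-1)=m-1$, which is adjacent to $0$: a wrap-around gap of $1$ \emph{is} an adjacency. For instance, $\{0,3\}\subseteq\mathbb Z_4$ and $\{0,3,6\}\subseteq\mathbb Z_7$ are not independent. As written, the inequality $|\mathbb Z_m:\{0,1\}|^-\le\lceil m/3\rceil$ is not established for $m\equiv1\pmod{3}$. That class includes $m=4$ and $m=7$, two of the cases on which the stability criterion of Theorem~\ref{thm:2subset-stability} depends.

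The repair is easy. For $m=3s+1$, use $\{0,3,\dots,3(s-1),3s-1\}$, whose cyclic gaps are $3,\dots,3,2,2$. More cleanly, for $m\ge3$ a $k$-subset of $\mathbb Z_m$ with cyclic gaps $g_1,\dots,g_k$ (summing to $m$) is a maximal independent set of $C_m$ if and only if every $g_i\in\{2,3\}$. A gap of $1$ is an edge, and a gap of at least $4$ leaves the vertex two steps past its start undominated. Such gap sequences exist exactly when $2k\le m\le 3k$. This gives both extremal sizes $\lceil m/3\rceil$ and $\lfloor m/2\rfloor$ at once, without separate witnesses or counting.
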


In what follows, unless otherwise stated, we consider only the right
subindices of $2$-subsets, which we simply call the lower and upper
subindices. By Theorem~\ref{thm:exact-subindices}, the corresponding
left subindices have the same values for finite groups.

\subsection{Exact and residue formulas}
Theorem~\ref{thm:exact-subindices} can be rewritten according to the residue
of $m$ modulo $2$ and modulo $3$.
For the upper subindices,
\[
|G:A|^+=
\begin{cases}
\dfrac{n}{2}, & m\equiv0\pmod2,\\[2mm]
\dfrac{n-q}{2}, & m\equiv1\pmod2.
\end{cases}
\]
For the lower subindices,
\[
|G:A|^-=
\begin{cases}
\dfrac{n}{3}, & m\equiv0\pmod3,\\[2mm]
\dfrac{n+2q}{3}, & m\equiv1\pmod3,\\[2mm]
\dfrac{n+q}{3}, & m\equiv2\pmod3.
\end{cases}
\]

We shall use these formulas repeatedly below.

It is also useful to record the size of the difference sets. Since
\[
A^{-1}A=\{1,x,x^{-1}\},
\]
we have
\[
|A^{-1}A|=
\begin{cases}
2, & m=2,\\
3, & m\ge3.
\end{cases}
\]

\subsection{Prescribed upper subindices}

The exact formula also gives a simple criterion for the existence of a
$2$-subset with a prescribed upper subindex.

\begin{proposition}\label{prop:prescribed-upper}
Let $G$ be a finite group of order $n$, and let
$1\le i\le\lfloor n/2\rfloor$.

If $i<n/2$, then $G$ contains a $2$-subset $A$ such that
\[
|G:A|^+=i
\]
if and only if
\[
m=\frac{n}{n-2i}
\]
is an odd integer and $G$ contains an element of order $m$.

If $i=n/2$, then such a $2$-subset exists if and only if $G$ contains an
element of even order.
\end{proposition}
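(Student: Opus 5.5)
The plan is to read everything off the residue formula for $|G:A|^+$ that follows from Theorem~\ref{thm:exact-subindices}, together with the observation that a $2$-subset $A=\{a,b\}$ with $\operatorname{ord}(a^{-1}b)=m$ exists for every $m\ge2$ that is the order of some element of $G$. Indeed, given $x\in G$ of order $m\ge2$, take $A=\{1,x\}$; then $x\ne1$ and $a^{-1}b=x$. Conversely, every $2$-subset produces such an element $x=a^{-1}b\neq 1$. Hence the set of attainable values of $|G:A|^+$ is exactly
\[
\{\,q\lfloor m/2\rfloor : m\ge2 \text{ an element order of } G,\ q=n/m\,\}.
\]

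First we treat the case $i=n/2$. By the residue formula, $|G:A|^+=n/2$ when $m$ is even. When $m$ is odd, $|G:A|^+=(n-q)/2<n/2$, since $q\ge1$. So the value $n/2$ is attained if and only if some element has even order $m$. Such an element is automatically nontrivial, so this gives the second assertion.

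Next we treat the case $i<n/2$. Here $m$ must be odd, because even $m$ gives exactly $n/2$. In that case $i=(n-q)/2$, which means $q=n-2i$. Since $q=n/m$, this is equivalent to $m=n/(n-2i)$. Therefore, if $A$ attains $i$, then $m=n/(n-2i)$ is an odd integer and $G$ has an element of that order.

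For the converse, suppose $m=n/(n-2i)$ is an odd integer and $x\in G$ has order $m$. Since $i\ge1$ we have $n-2i<n$, so $m>1$, and because $m$ is odd, $m\ge3$. Taking $A=\{1,x\}$, the formula gives $|G:A|^+=(n-n/m)/2=(n-(n-2i))/2=i$.

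There is no real obstacle. The only care needed is bookkeeping: checking that $m\ge2$, so that the chosen element gives a genuine $2$-subset, and noting that even orders cannot produce values below $n/2$.
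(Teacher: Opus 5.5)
Your proposal is correct and follows essentially the same route as the paper. Both arguments read off the residue formula from Theorem~\ref{thm:exact-subindices}, force $m$ odd when $i<n/2$, solve $q=n-2i$, and take $A=\{1,x\}$ for the converse; your extra checks (that $m\ge3$, so $\{1,x\}$ is a genuine $2$-subset, and that odd $m$ gives $(n-q)/2<n/2$) make explicit what the paper leaves implicit.
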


\begin{proof}
Let $A=\{a,b\}$ and put
\[
m=\operatorname{ord}(a^{-1}b),
\quad
q=\frac{n}{m}.
\]
If $m$ is even, then Theorem~\ref{thm:exact-subindices} gives
\[
|G:A|^+=\frac{n}{2}.
\]
Thus the case $i<n/2$ requires $m$ to be odd. In this case,
\[
i=\frac{n-q}{2},
\]
and hence
\[
q=n-2i,
\quad
m=\frac{n}{q}=\frac{n}{n-2i}.
\]
Conversely, if $G$ contains an element $x$ of this odd order, then for
$A=\{1,x\}$ we have $q=n-2i$, and therefore
\[
|G:A|^+=\frac{n-q}{2}=i.
\]
The case $i=n/2$ follows directly from the even case of the residue formula.
\end{proof}

As a first application, we recover the following characterization.

\begin{corollary}\label{cor:upper-index-three}
A finite group $G$ has a $2$-subset with upper subindex $3$ if and only if
\[
|G|\in\{6,7,9\}.
\]
\end{corollary}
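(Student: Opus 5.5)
We apply Proposition~\ref{prop:prescribed-upper} with $i=3$ and let $n=|G|$. The condition $1\le 3\le\lfloor n/2\rfloor$ forces $n\ge 6$. We split according to whether $3=n/2$ or $3<n/2$.

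\emph{Case $3=n/2$, i.e.\ $n=6$.} By Proposition~\ref{prop:prescribed-upper}, a suitable $2$-subset exists if and only if $G$ contains an element of even order. Every group of order $6$ contains an element of order $2$ by Cauchy's theorem, so $n=6$ always works.

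\emph{Case $3<n/2$, i.e.\ $n\ge 7$.} Here we need $m=n/(n-6)$ to be an odd integer and $G$ to contain an element of order $m$. Put $d=n-6\ge 1$. Integrality of $m$ means $d\mid n=d+6$, i.e.\ $d\mid 6$, so $d\in\{1,2,3,6\}$. These give
\[
(n,m)\in\{(7,7),\ (8,4),\ (9,3),\ (12,2)\}.
\]
The odd values of $m$ occur only for $n=7$ with $m=7$ and $n=9$ with $m=3$. In both cases $m$ is a prime dividing $n$, so Cauchy's theorem supplies an element of order $m$. Hence these two orders work. For $n=8$ and $n=12$ the quotient $m$ is even, so the criterion fails and no such $2$-subset exists.

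Combining the two cases, a $2$-subset with upper subindex $3$ exists exactly when $|G|\in\{6,7,9\}$. There is no real obstacle: the only points needing care are the divisibility reduction $d\mid 6$ and remembering that the case $n=6$ is handled by the separate clause of Proposition~\ref{prop:prescribed-upper}.
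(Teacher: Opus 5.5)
Your proposal is correct and follows the paper's proof essentially step for step: you split into $n=6$ versus $n>6$ via Proposition~\ref{prop:prescribed-upper}, reduce to $n-6\mid 6$ (giving $n\in\{7,8,9,12\}$), and keep only the odd values $m=7,3$, with Cauchy's theorem supplying the required elements. One point is worth making explicit, though the paper also leaves it implicit: the restriction $n\ge 6$ comes from the universal bound $|G:A|^+\le\lfloor n/2\rfloor$, since the hypothesis range of the proposition does not by itself exclude $n<6$.
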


\begin{proof}
Let $n=|G|$. Necessarily $n\ge6$. If $3=n/2$, then $n=6$.

Now suppose that $3<n/2$. By Proposition~\ref{prop:prescribed-upper},
\[
m=\frac{n}{n-6}
\]
must be an odd integer. Equivalently,
\[
m=1+\frac{6}{n-6}.
\]
Thus $n-6$ divides $6$. The possibilities are
\[
n\in\{7,8,9,12\},
\]
giving respectively
\[
m\in\{7,4,3,2\}.
\]
The odd possibilities are therefore $n=7$ and $n=9$.

Conversely, a group of order $6$ contains an element of even order, a group
of order $7$ contains an element of order $7$, and a group of order $9$
contains an element of order $3$. Hence in each of these cases a $2$-subset
with upper subindex $3$ exists.
\end{proof}

\section{Index stability of 2-subsets}
In this section we characterize index stability of $2$-subsets.

\begin{theorem}\label{thm:2subset-stability}
Let $G$ be a finite group and let $A=\{a,b\}$ with $a\ne b$. Then the
following conditions are equivalent:
\begin{enumerate}
\item $A$ is right index-stable;
\item $A$ is left index-stable;
\item $A$ is index-stable;
\item
$
\operatorname{ord}(a^{-1}b)\in\{2,3,4,5,7\}.
$
\end{enumerate}
\end{theorem}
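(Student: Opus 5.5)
By Theorem~\ref{thm:exact-subindices}, the left and right lower subindices of $A$ coincide, and so do the left and right upper subindices. Hence (i), (ii) and (iii) are each equivalent to the single numerical condition
\[
q\left\lceil\frac{m}{3}\right\rceil=q\left\lfloor\frac{m}{2}\right\rfloor,
\qquad m=\operatorname{ord}(a^{-1}b),\ q=\frac{|G|}{m}.
\]
Indeed, (iii) trivially implies (i) and (ii). Conversely, if (i) holds, then $|G:A|^-=|G:A|^+$, and since $|G:A|_\pm=|G:A|^\pm$, all four subindices are equal; so (iii) holds. The same argument starting from (ii) gives (iii). Since $q\ge1$, the condition above is equivalent to
\[
\left\lceil\frac{m}{3}\right\rceil=\left\lfloor\frac{m}{2}\right\rfloor,
\]
where $m\ge2$ because $a\ne b$. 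So the whole theorem reduces to the claim that, for integers $m\ge2$, this equality holds exactly when $m\in\{2,3,4,5,7\}$.

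For the finitely many small values I will just tabulate. For $m=2,3,4,5,6,7$ the pairs $(\lceil m/3\rceil,\lfloor m/2\rfloor)$ are $(1,1)$, $(1,1)$, $(2,2)$, $(2,2)$, $(2,3)$ and $(3,3)$. So the equality holds for $m=2,3,4,5,7$ and fails for $m=6$.

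For $m\ge8$ I will show that $\lfloor m/2\rfloor>\lceil m/3\rceil$, using the bounds $\lfloor m/2\rfloor\ge (m-1)/2$ and $\lceil m/3\rceil\le (m+2)/3$. It suffices that $(m-1)/2>(m+2)/3$, which is equivalent to $3m-3>2m+4$, that is, $m>7$. This settles every $m\ge8$.

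None of the steps is a real obstacle. The only point needing care is the boundary case $m=7$, where the crude inequality is not strict. That value, together with $m=6$, has to be handled by the explicit table rather than by the general estimate, so I will make sure the table covers every $m\le7$ and the estimate is used only for $m\ge8$.
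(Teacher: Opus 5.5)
Your proof is correct and follows the paper's argument. Both use Theorem~\ref{thm:exact-subindices} to reduce (i)--(iii) to the equation $\lceil m/3\rceil=\lfloor m/2\rfloor$ for $m\ge2$; the paper only asserts the solution set $\{2,3,4,5,7\}$, while you also verify it, by the table for $m\le7$ and the estimate $(m-1)/2>(m+2)/3$ for $m\ge8$.
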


\begin{proof}
By Theorem~\ref{thm:exact-subindices}, each of the first three conditions is
equivalent to
\[
\left\lceil\frac{m}{3}\right\rceil
=
\left\lfloor\frac{m}{2}\right\rfloor,
\]
where $m=\operatorname{ord}(a^{-1}b)\ge2$. The solutions of this equation are
precisely $m\in\{2,3,4,5,7\}$.
\end{proof}

Thus index stability of a $2$-subset depends only on the order of the
difference $a^{-1}b$, and not otherwise on the ambient group.

\section{2-subsets attaining the universal bounds}

We now determine when the exact subindices of a $2$-subset attain the
universal lower and upper bounds. Let $G$ be a finite group of order $n$,
let $A=\{a,b\}$ with $a\ne b$, and put
\[
x=a^{-1}b,\quad
m=\operatorname{ord}(x),\quad
H=\langle x\rangle,\quad
q=|G:H|.
\]
Thus
\[
n=mq.
\]

\subsection{Two arithmetic lemmas}

We first record two elementary arithmetic facts that will be used repeatedly.

\begin{lemma}\label{lem:ceiling-equation}
Let $n=mq$, where $m,q$ are positive integers. Then
\[
q\left\lceil\frac{m}{3}\right\rceil
=
\left\lceil\frac{n}{3}\right\rceil
\]
if and only if one of the following conditions holds:
\begin{enumerate}
\item $m\equiv0\pmod3$;
\item $m\equiv1\pmod3$ and $q=1$;
\item $m\equiv2\pmod3$ and $q\in\{1,2\}$.
\end{enumerate}
\end{lemma}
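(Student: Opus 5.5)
We split according to the residue $r\in\{0,1,2\}$ of $m$ modulo $3$, writing $m=3k+r$ with $k\ge0$. Then $n=mq=3kq+rq$, and both sides of the claimed equation become explicit. In each case we compare
\[
q\left\lceil\frac{m}{3}\right\rceil
\quad\text{with}\quad
\left\lceil\frac{n}{3}\right\rceil=kq+\left\lceil\frac{rq}{3}\right\rceil .
\]

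\textbf{Case $r=0$.} Here $\lceil m/3\rceil=k$ and $n/3=kq$ is an integer. Both sides therefore equal $kq$, and the equation holds for every $q$. This is condition~(i).

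\textbf{Case $r=1$.} Here $\lceil m/3\rceil=k+1$, so the left side is $kq+q$. The right side is $kq+\lceil q/3\rceil$. Equality thus amounts to $q=\lceil q/3\rceil$. For $q\ge2$ we have $\lceil q/3\rceil\le (q+2)/3<q$, so equality forces $q=1$. Conversely, $q=1$ clearly works. This is condition~(ii).

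\textbf{Case $r=2$.} Again $\lceil m/3\rceil=k+1$, and the right side is $kq+\lceil 2q/3\rceil$. Equality means $q=\lceil 2q/3\rceil$, that is, $q-1<2q/3\le q$, which is equivalent to $q<3$. Hence $q\in\{1,2\}$, and both values satisfy the equation directly ($\lceil 2/3\rceil=1$ and $\lceil 4/3\rceil=2$). This is condition~(iii).

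Since the three residues exhaust all possibilities and the three conditions are mutually exclusive by residue, the stated equivalence follows. There is no real obstacle in this argument. The only point requiring care is the elementary inequality $\lceil cq/3\rceil<q$ for $c\in\{1,2\}$ and $q$ above the stated threshold, and this is immediate from $\lceil t\rceil<t+1$. One could also phrase the whole argument through the residue formulas of Section~3: the left side $q\lceil m/3\rceil$ equals $n/3$, $(n+2q)/3$, or $(n+q)/3$ according as $r=0,1,2$. One then compares these with $\lceil n/3\rceil\le (n+2)/3$, which gives $2q\le 2$ in the case $r=1$ and $q\le 2$ in the case $r=2$.
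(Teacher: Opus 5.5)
Your proof is correct and follows the paper's argument essentially verbatim: writing $m=3k+r$, each case either holds trivially ($r=0$) or reduces to $q=\lceil rq/3\rceil$, which forces $q=1$ for $r=1$ and $q\le 2$ for $r=2$. You justify these last equivalences a bit more explicitly than the paper does, but the route is the same.
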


\begin{proof}
Write $m=3s+r$, where $r\in\{0,1,2\}$.

If $r=0$, then
\[
q\left\lceil\frac{m}{3}\right\rceil=sq
=
\left\lceil\frac{3sq}{3}\right\rceil,
\]
so the equality holds for every $q$.

If $r=1$, then the equality becomes
\[
(s+1)q=sq+\left\lceil\frac{q}{3}\right\rceil,
\]
or equivalently
\[
q=\left\lceil\frac{q}{3}\right\rceil.
\]
This holds precisely when $q=1$.

If $r=2$, the equality becomes
\[
(s+1)q=sq+\left\lceil\frac{2q}{3}\right\rceil,
\]
or equivalently
\[
q=\left\lceil\frac{2q}{3}\right\rceil.
\]
This holds precisely when $q\in\{1,2\}$.
\end{proof}

\begin{lemma}\label{lem:floor-equation}
Let $n=mq$, where $m,q$ are positive integers. Then
\[
q\left\lfloor\frac{m}{2}\right\rfloor
=
\left\lfloor\frac{n}{2}\right\rfloor
\]
if and only if $m$ is even, or $m$ is odd and $q=1$.
\end{lemma}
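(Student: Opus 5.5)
Following the proof of Lemma~\ref{lem:ceiling-equation}, I would split according to the parity of $m$, writing $m=2s+r$ with $r\in\{0,1\}$ and $s\ge0$, so that $n=mq=2sq+rq$. In each case the left-hand side $q\lfloor m/2\rfloor$ equals $sq$. For the right-hand side I would pull the integer $sq$ out of the floor, using $\lfloor k+t\rfloor=k+\lfloor t\rfloor$ for integers $k$:
\[
\left\lfloor\frac{n}{2}\right\rfloor=sq+\left\lfloor\frac{rq}{2}\right\rfloor.
\]
The equation then reduces in each case to a condition on $q$ alone.

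If $r=0$, that is, $m$ is even, then $\lfloor rq/2\rfloor=0$. Both sides equal $sq$, so the equality holds for every $q$.

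If $r=1$, that is, $m$ is odd, the equality becomes $sq=sq+\lfloor q/2\rfloor$, which is equivalent to
\[
\left\lfloor\frac{q}{2}\right\rfloor=0.
\]
Since $q$ is a positive integer, this holds exactly when $q=1$.

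Combining the two cases gives the claimed criterion: the equality holds if and only if $m$ is even, or $m$ is odd and $q=1$. I do not expect any real obstacle. The only point needing care is the integer-part manipulation, namely that $sq$ is an integer so it can be taken outside the floor, and that $q\ge1$ is used in the final step to exclude $q=0$.
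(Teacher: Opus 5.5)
Your proof is correct and is essentially the paper's argument. You split on the parity of $m$, pull the integer $sq$ out of $\lfloor n/2\rfloor$, and in the odd case reduce the equality to $\lfloor q/2\rfloor=0$, which for a positive integer $q$ means $q=1$.
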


\begin{proof}
If $m=2s$, then
\[
q\left\lfloor\frac{m}{2}\right\rfloor=sq
=
\left\lfloor\frac{2sq}{2}\right\rfloor,
\]
so the equality holds for every $q$.

If $m=2s+1$, then
\[
\left\lfloor\frac{n}{2}\right\rfloor
=
sq+\left\lfloor\frac{q}{2}\right\rfloor.
\]
Hence the required equality is equivalent to
\[
\left\lfloor\frac{q}{2}\right\rfloor=0,
\]
which holds precisely when $q=1$.
\end{proof}

\subsection{Attaining the lower bound}

Recall that
\[
|A^{-1}A|=|AA^{-1}|=
\begin{cases}
2, & m=2,\\
3, & m\ge3.
\end{cases}
\]
Thus the universal lower bounds for the right and left lower subindices are
the same.

\begin{theorem}\label{thm:lower-bound-2subset}
Let $G$ be a finite group and let $A=\{a,b\}$ with $a\ne b$. Put
$m=\operatorname{ord}(a^{-1}b)$ and
$q=|G:\langle a^{-1}b\rangle|$. Then $A$ attains the universal lower bound,
that is,
\[
|G:A|^-=\left\lceil\frac{|G|}{|A^{-1}A|}\right\rceil.
\]
if and only if either $m=2$, or $m\ge3$ and one of the following conditions
holds:
\begin{enumerate}
\item $m\equiv0\pmod3$;
\item $m\equiv1\pmod3$ and $q=1$;
\item $m\equiv2\pmod3$ and $q\le2$.
\end{enumerate}
\end{theorem}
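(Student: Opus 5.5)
The plan is to combine Theorem~\ref{thm:exact-subindices} with the computation of $|A^{-1}A|$ and Lemma~\ref{lem:ceiling-equation}, splitting according to whether $m=2$ or $m\ge3$. Write $n=|G|=mq$. By Theorem~\ref{thm:exact-subindices}, $|G:A|^-=q\lceil m/3\rceil$. We have already recorded that $A^{-1}A=\{1,x,x^{-1}\}$, so $|A^{-1}A|=2$ when $m=2$ and $|A^{-1}A|=3$ when $m\ge3$. The universal lower bound is therefore $\lceil n/2\rceil$ in the first case and $\lceil n/3\rceil$ in the second.

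\emph{Case $m=2$.} Here $q\lceil 2/3\rceil=q=n/2$. Since $n=2q$ is even, $\lceil n/2\rceil=n/2$, so the bound is always attained. This matches the unconditional clause ``$m=2$'' in the statement. Note that $m=2\equiv2\pmod3$, so without this separate clause Lemma~\ref{lem:ceiling-equation} would wrongly impose $q\le2$. That mismatch is the only reason the case must be treated separately: the denominator $|A^{-1}A|$ drops from $3$ to $2$.

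\emph{Case $m\ge3$.} The condition becomes
\[
q\left\lceil\frac{m}{3}\right\rceil=\left\lceil\frac{n}{3}\right\rceil,
\]
and Lemma~\ref{lem:ceiling-equation} gives exactly conditions (i)--(iii), with $q\in\{1,2\}$ rewritten as $q\le2$. Combining the two cases gives the equivalence. The only step needing care is the $m=2$ case, and it is a one-line check.
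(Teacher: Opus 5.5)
Your proposal is correct and follows the paper's proof essentially verbatim. The case $m=2$ is checked directly using $|A^{-1}A|=2$ and $n=2q$, and the case $m\ge3$ reduces via Theorem~\ref{thm:exact-subindices} and $|A^{-1}A|=3$ to Lemma~\ref{lem:ceiling-equation}; your remark that $m=2$ must be split off, since it would otherwise wrongly fall under the $m\equiv2\pmod3$ clause, is accurate.
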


\begin{proof}
If $m=2$, then $|A^{-1}A|=2$ and $n=2q$. Hence
\[
|G:A|^-=q\left\lceil\frac{2}{3}\right\rceil=q
=
\left\lceil\frac{n}{2}\right\rceil.
\]
Suppose now that $m\ge3$. Then $|A^{-1}A|=3$, and by
Theorem~\ref{thm:exact-subindices},
\[
|G:A|^-=q\left\lceil\frac{m}{3}\right\rceil.
\]
Therefore $A$ attains the lower bound if and only if
\[
q\left\lceil\frac{m}{3}\right\rceil
=
\left\lceil\frac{n}{3}\right\rceil.
\]
The result now follows from Lemma~\ref{lem:ceiling-equation}.
\end{proof}

\subsection{Attaining the upper bound}

Since $|A|=2$, the universal upper bound is
\[
\left\lfloor\frac{|G|}{2}\right\rfloor.
\]

\begin{theorem}\label{thm:upper-bound-2subset}
Let $G$ be a finite group and let $A=\{a,b\}$ with $a\ne b$. Put
$m=\operatorname{ord}(a^{-1}b)$. Then $A$ attains the universal upper bound,
that is,
\[
|G:A|^+=\left\lfloor\frac{|G|}{2}\right\rfloor,
\]
if and only if either
\begin{enumerate}
\item $m$ is even; or
\item $m$ is odd and $\langle a^{-1}b\rangle=G$.
\end{enumerate}
\end{theorem}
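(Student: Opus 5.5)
The proof follows the same pattern as Theorem~\ref{thm:lower-bound-2subset}: we compute $|G:A|^+$ exactly and then reduce to the arithmetic Lemma~\ref{lem:floor-equation}. Let $n=|G|$, $x=a^{-1}b$, $H=\langle x\rangle$, and $q=|G:H|$, so that $n=mq$ with $m\ge2$. By Theorem~\ref{thm:exact-subindices},
\[
|G:A|^+=q\left\lfloor\frac{m}{2}\right\rfloor.
\]
Therefore $A$ attains the universal upper bound precisely when
\[
q\left\lfloor\frac{m}{2}\right\rfloor=\left\lfloor\frac{n}{2}\right\rfloor .
\]

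We then apply Lemma~\ref{lem:floor-equation} with $n=mq$. It says that this equality holds if and only if $m$ is even, or $m$ is odd and $q=1$. It remains to interpret $q=1$ group-theoretically. Since $q=|G:\langle a^{-1}b\rangle|$, we have $q=1$ exactly when $\langle a^{-1}b\rangle=G$. This yields conditions (i) and (ii) of the statement.

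There is no real obstacle here, since the whole argument is a direct combination of Theorem~\ref{thm:exact-subindices} with Lemma~\ref{lem:floor-equation}. One remark is worth making. In case (ii), $G$ is cyclic of odd order $m=n$, and the equality reduces to the identity $\lfloor n/2\rfloor=\lfloor n/2\rfloor$. In the even case, both sides equal $n/2$, and this holds for any index $q$.
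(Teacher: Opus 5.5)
Your proposal is correct and follows the paper's proof essentially verbatim. Like the paper, you take $|G:A|^+=q\lfloor m/2\rfloor$ from Theorem~\ref{thm:exact-subindices}, apply Lemma~\ref{lem:floor-equation} with $n=mq$, and read $q=1$ as $\langle a^{-1}b\rangle=G$.
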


\begin{proof}
Put
\[
q=|G:\langle a^{-1}b\rangle|.
\]
By Theorem~\ref{thm:exact-subindices},
\[
|G:A|^+=q\left\lfloor\frac{m}{2}\right\rfloor.
\]
Since $|G|=mq$, the required equality is
\[
q\left\lfloor\frac{m}{2}\right\rfloor
=
\left\lfloor\frac{mq}{2}\right\rfloor.
\]
By Lemma~\ref{lem:floor-equation}, this holds precisely when $m$ is even,
or when $m$ is odd and $q=1$.
\end{proof}

\subsection{Index-stable 2-subsets attaining both bounds}

We can now characterize the index-stable $2$-subsets whose common subindex
simultaneously attains the universal lower and upper bounds.

\begin{theorem}\label{thm:stable-both-bounds}
Let $G$ be a finite group and let $A=\{a,b\}$ with $a\ne b$. Then $A$ is
index-stable and attains both universal bounds if and only if one of the
following conditions holds:
\begin{enumerate}
\item $\operatorname{ord}(a^{-1}b)=2$;
\item $\langle a^{-1}b\rangle=G$ and
$\operatorname{ord}(a^{-1}b)\in\{3,4,5,7\}$.
\end{enumerate}

In case \textup{(i)}, the common subindex is $|G|/2$. In case \textup{(ii)}, the
common subindex is respectively
\[
1,\ 2,\ 2,\ 3
\]
for $|G|=3,4,5,7$.
\end{theorem}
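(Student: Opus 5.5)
We combine Theorem~\ref{thm:2subset-stability} with Theorems~\ref{thm:lower-bound-2subset} and~\ref{thm:upper-bound-2subset}. Throughout, put $m=\operatorname{ord}(a^{-1}b)$ and $q=|G:\langle a^{-1}b\rangle|$, so that $n=|G|=mq$. By Theorem~\ref{thm:2subset-stability}, index stability of $A$ is equivalent to $m\in\{2,3,4,5,7\}$. It therefore suffices to go through these five values of $m$ and, for each one, decide when both bounds are attained.

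For $m=2$, Theorem~\ref{thm:lower-bound-2subset} gives the lower bound directly. Since $m$ is even, Theorem~\ref{thm:upper-bound-2subset} gives the upper bound. Hence every $q$ works. The common value is $q\lceil 2/3\rceil=q=n/2$, by Theorem~\ref{thm:exact-subindices}; this settles case (i).

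For $m\in\{3,4,5,7\}$ we have $m\ge3$, so the relevant lower bound is $\lceil n/3\rceil$. We check the upper bound first.
\begin{itemize}
\item If $m$ is odd ($m=3,5,7$), Theorem~\ref{thm:upper-bound-2subset} forces $q=1$, that is, $\langle a^{-1}b\rangle=G$.
\item If $m=4$, the upper bound holds for every $q$. However, $4\equiv1\pmod3$, so Theorem~\ref{thm:lower-bound-2subset}(ii) forces $q=1$.
\end{itemize}
Thus in all four cases $q=1$ is necessary.

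Conversely, suppose $q=1$. The lower bound then holds by Theorem~\ref{thm:lower-bound-2subset}, since each residue case allows $q=1$. The upper bound holds by Theorem~\ref{thm:upper-bound-2subset}: either $m$ is even, or $m$ is odd and $q=1$. This proves (ii). The common subindex is $\lceil m/3\rceil=\lfloor m/2\rfloor$, which equals $1,2,2,3$ for $m=3,4,5,7$, with $|G|=m$.

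No step is a real obstacle. The only point needing care is the case $m=4$: there the constraint $q=1$ comes from the lower bound rather than the upper bound, so both bound theorems must be checked in every case.
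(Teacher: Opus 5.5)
Your proposal is correct and follows essentially the same route as the paper. You reduce index stability to $m\in\{2,3,4,5,7\}$ via Theorem~\ref{thm:2subset-stability}, then apply Theorems~\ref{thm:lower-bound-2subset} and~\ref{thm:upper-bound-2subset} case by case: the upper bound forces $q=1$ for odd $m$, and the lower bound forces it for $m=4$. Your explicit check of the converse (that $q=1$ yields both bounds) is a detail the paper leaves implicit.
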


\begin{proof}
By Theorem~\ref{thm:2subset-stability}, index stability is equivalent to
\[
m=\operatorname{ord}(a^{-1}b)\in\{2,3,4,5,7\}.
\]

If $m=2$, Theorems~\ref{thm:lower-bound-2subset}
and~\ref{thm:upper-bound-2subset} show that both bounds are attained for every
$q$, and the common value is $|G|/2$.

Suppose now that $m\in\{3,4,5,7\}$. If $m=3$, attainment of the upper bound
requires $q=1$. If $m=4$, attainment of the lower bound requires $q=1$.
If $m=5$, attainment of the upper bound requires $q=1$, and if $m=7$,
attainment of either bound requires $q=1$.
Thus in every case $q=1$,
or equivalently $\langle a^{-1}b\rangle=G$.
The values of the common subindex follow from
Theorem~\ref{thm:exact-subindices}.
\end{proof}

\begin{corollary}\label{cor:cyclic-both-bounds}
Let $A=\{a,b\}\subseteq\mathbb Z_n$, where $a\ne b$, and put
\[
d=\gcd(n,a-b).
\]
Then $A$ is index-stable and attains both universal bounds if and only if
either
\begin{enumerate}
\item $n$ is even and $d=n/2$, in which case the common subindex is $n/2$;
or
\item $n\in\{3,4,5,7\}$ and $d=1$, in which case the common subindex is
respectively $1,2,2,3$.
\end{enumerate}
\end{corollary}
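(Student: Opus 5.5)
This follows by specializing Theorem~\ref{thm:stable-both-bounds} to $G=\mathbb Z_n$, written additively. In this notation the element $a^{-1}b$ becomes $b-a$. Its order in $\mathbb Z_n$ is $m=n/\gcd(n,b-a)=n/d$, because $\gcd(n,b-a)=\gcd(n,a-b)$. The index of the subgroup it generates is then $q=n/m=d$.

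Next we translate the two alternatives of Theorem~\ref{thm:stable-both-bounds}. Condition (i) says $m=2$. This means $n/d=2$, which happens exactly when $n$ is even and $d=n/2$. In that case the theorem gives the common subindex $|G|/2=n/2$.

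Condition (ii) says $\langle b-a\rangle=\mathbb Z_n$ and $m\in\{3,4,5,7\}$. The first part is equivalent to $q=1$, that is, $d=1$. Then $m=n$, so the second part becomes $n\in\{3,4,5,7\}$. Conversely, if $d=1$ and $n$ is one of these values, then $b-a$ generates $\mathbb Z_n$ and has order $n\in\{3,4,5,7\}$. For the common subindex, the theorem gives $1,2,2,3$ for $n=3,4,5,7$, which can be checked directly from $\lceil n/3\rceil=\lfloor n/2\rfloor$.

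No step is a real obstacle. The only point needing care is the identification $\operatorname{ord}(b-a)=n/\gcd(n,a-b)$, together with the fact that the sign of $a-b$ does not change the gcd.
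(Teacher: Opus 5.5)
Your proposal is correct and is the paper's approach: the corollary is obtained by specializing Theorem~\ref{thm:stable-both-bounds} to $\mathbb Z_n$, using $\operatorname{ord}(a-b)=n/d$ and $|\mathbb Z_n:\langle a-b\rangle|=d$. The paper states this reduction explicitly only in the proof of the analogous Corollary~\ref{cor:strongly-unstable-cyclic}, and your translation of conditions (i) and (ii), including the values $1,2,2,3$ of the common subindex, is accurate.
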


\subsection{Strongly index-unstable 2-subsets}

We now consider the opposite extremal situation, where a $2$-subset attains
both universal bounds but the two attained values are distinct.

Let $G$ be a finite group and let $A=\{a,b\}$ with $a\ne b$. We call $A$
\emph{strongly index-unstable} if
\[
|G:A|^-=\left\lceil\frac{|G|}{|A^{-1}A|}\right\rceil
<
|G:A|^+=\left\lfloor\frac{|G|}{2}\right\rfloor.
\]
The following theorem and its corollary give the complete characterization;
the cyclic case is stated in terms of $d=\gcd(n,a-b)$.

\begin{theorem}\label{thm:strongly-unstable-2subset}
Let $G$ be a finite group such that
\[
|G|\notin\{2,3,4,5,7\},
\]
and let $A=\{a,b\}$ with $a\ne b$. Put
\[
m=\operatorname{ord}(a^{-1}b),
\quad
q=|G:\langle a^{-1}b\rangle|.
\]
Then $A$ is strongly index-unstable if and only if $m\ge3$ and one of the
following conditions holds:
\begin{enumerate}
\item $q=1$;
\item $q=2$ and $m\equiv0,2\pmod6$;
\item $q\geq3$ and $m\equiv0\pmod6$.
\end{enumerate}
\end{theorem}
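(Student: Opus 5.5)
Strong index-instability is, by definition, the conjunction of three conditions: $A$ attains the universal lower bound, $A$ attains the universal upper bound, and $|G:A|^-<|G:A|^+$. The plan is to combine Theorems~\ref{thm:lower-bound-2subset} and~\ref{thm:upper-bound-2subset} and then remove the index-stable cases using Theorem~\ref{thm:2subset-stability}.

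First we exclude $m=2$. In that case Theorem~\ref{thm:exact-subindices} gives $|G:A|^-=|G:A|^+=q$, so the strict inequality fails. Hence $m\ge3$, and then $|A^{-1}A|=3$.

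Next we intersect the two attainment conditions.

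\begin{itemize}
\item The upper bound is attained if and only if $m$ is even, or $m$ is odd and $q=1$ (Theorem~\ref{thm:upper-bound-2subset}).
\item The lower bound is attained if and only if $m\equiv0\pmod 3$; or $m\equiv1\pmod3$ and $q=1$; or $m\equiv2\pmod3$ and $q\le2$ (Theorem~\ref{thm:lower-bound-2subset}).
\end{itemize}

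We split according to $q$.

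\begin{itemize}
\item If $q=1$, every branch of both criteria holds, so both bounds are attained for all $m\ge3$.
\item If $q=2$, the upper bound forces $m$ even. The lower bound then allows $m\equiv0$ or $2\pmod 3$. For even $m$ this means $m\equiv0$ or $2\pmod 6$; the excluded residue $m\equiv4\pmod6$ corresponds to $m\equiv1\pmod3$.
\item If $q\ge3$, the upper bound forces $m$ even and the lower bound forces $m\equiv0\pmod3$. Together these give $m\equiv0\pmod6$.
\end{itemize}

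This reproduces conditions (i)--(iii) as the exact set where both bounds are attained with $m\ge3$.

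Finally, we check strict inequality. By Theorem~\ref{thm:2subset-stability}, equality of the two subindices happens exactly when $m\in\{3,4,5,7\}$ (we are in the range $m\ge3$).

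\begin{itemize}
\item In case (iii), $m\equiv0\pmod6$ gives $m\ge6$ and $m\ne7$, so the inequality is automatically strict.
\item In case (ii), $m\equiv0,2\pmod6$ with $m\ge3$ gives $m\ge6$, so again $m\notin\{3,4,5,7\}$.
\item In case (i), $q=1$ means $m=|G|$. The hypothesis $|G|\notin\{2,3,4,5,7\}$ then gives $m\notin\{3,4,5,7\}$, so the inequality is strict.
\end{itemize}

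The converse direction is the same chain of equivalences read backwards, so the two directions are proved together.

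The main obstacle is only bookkeeping. We must confirm that the hypothesis on $|G|$ is used exactly where needed, namely in case (i). We must also translate the pair of congruence conditions (mod 2 and mod 3) into residues mod 6 without error. In particular, case (ii) must exclude $m\equiv4\pmod6$, because then $m\equiv1\pmod3$, which requires $q=1$.
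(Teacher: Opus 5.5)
Your proof is correct and follows essentially the same route as the paper: discard $m=2$, then intersect the attainment criteria of Theorems~\ref{thm:lower-bound-2subset} and~\ref{thm:upper-bound-2subset} by splitting on $q$. The only difference is the strictness check. The paper observes directly that $\lceil |G|/3\rceil<\lfloor |G|/2\rfloor$ exactly when $|G|\notin\{2,3,4,5,7\}$, whereas you use the criterion $m\notin\{3,4,5,7\}$ case by case, which also shows that the hypothesis on $|G|$ is needed only in case (i).
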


\begin{proof}
If $m=2$, then both subindices equal $|G|/2$, so $A$ is not
strongly index-unstable. Suppose that $m\ge3$.

By Theorems~\ref{thm:lower-bound-2subset} and
\ref{thm:upper-bound-2subset}, if $q=1$, both bounds are attained,
giving condition (i).

Suppose now that $q>1$. The upper bound is attained if and only if
$m$ is even. Under this assumption, the lower bound is attained if
and only if either $3\mid m$, or $m\equiv2\pmod3$ and $q\le2$.
These possibilities yield conditions (ii) and (iii).

Finally, since $m\ge3$, the two bounds are
$\lceil |G|/3\rceil$ and $\lfloor |G|/2\rfloor$.
They are distinct precisely when $|G|\notin\{2,3,4,5,7\}$,
as required.
\end{proof}

The following corollary gives a partial answer, for $2$-subsets, to Problem~1 in
\cite{Hoo-Zn} on subindices and subfactors of $\mathbb Z_n$.

\begin{corollary}\label{cor:strongly-unstable-cyclic}
Let $A=\{a,b\}\subseteq\mathbb Z_n$, where $a\ne b$, and put
\[
d=\gcd(n,a-b).
\]
Then $A$ is strongly index-unstable if and only if
\[
n\notin\{2,3,4,5,7\},
\quad
d\le\frac{n}{3},
\]
and at least one of the following conditions holds:

\begin{enumerate}
\item $d=1$;
\item $d=2$ and $n/d\equiv0,2\pmod6$;
\item $d\ge3$ and $n/d\equiv0\pmod6$.
\end{enumerate}
\end{corollary}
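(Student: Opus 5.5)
The plan is to specialize Theorem~\ref{thm:strongly-unstable-2subset} to $G=\mathbb Z_n$, written additively. In additive notation the element $a^{-1}b$ becomes $b-a$. The cyclic subgroup $\langle b-a\rangle$ of $\mathbb Z_n$ is the subgroup generated by $d=\gcd(n,a-b)$, which has order $m=n/d$ and index $q=d$. Hence the invariants $(m,q)$ of the theorem are $(n/d,\,d)$, and each condition can be translated directly.

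I would split the argument according to whether $n$ lies in the exceptional set.

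First suppose $n\in\{2,3,4,5,7\}$. Then $\lceil n/3\rceil=\lfloor n/2\rfloor$, so the two universal bounds coincide. For $m\ge3$ these bounds are $\lceil n/|A^{-1}A|\rceil=\lceil n/3\rceil$ and $\lfloor n/2\rfloor$. For $m=2$ the lower bound is $n/2$ and the upper bound is $\lfloor n/2\rfloor=n/2$, so again they coincide. In either case the strict inequality in the definition of strong index-instability cannot hold, and $A$ is not strongly index-unstable. This accounts for the requirement $n\notin\{2,3,4,5,7\}$ in the corollary.

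Now suppose $n\notin\{2,3,4,5,7\}$, so that Theorem~\ref{thm:strongly-unstable-2subset} applies. Its hypothesis $m\ge3$ reads $n/d\ge3$, that is, $d\le n/3$. Its three alternatives translate as follows:
\begin{enumerate}
\item $q=1$ becomes $d=1$;
\item $q=2$ with $m\equiv0,2\pmod 6$ becomes $d=2$ with $n/d\equiv0,2\pmod6$;
\item $q\ge3$ with $m\equiv0\pmod6$ becomes $d\ge3$ with $n/d\equiv0\pmod6$.
\end{enumerate}
These are exactly conditions (i)--(iii) of the corollary, which completes the equivalence.

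The only point requiring care is the identification $\operatorname{ord}(b-a)=n/\gcd(n,b-a)$, together with the observation that $\gcd(n,b-a)=\gcd(n,a-b)$. Both are standard facts about $\mathbb Z_n$, so I expect no genuine obstacle. I would also check that the two halves are consistent: in the first case the corollary's condition $n\notin\{2,3,4,5,7\}$ fails, so both sides of the equivalence are false and the corollary holds there as well.
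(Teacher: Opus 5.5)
Your proof is correct and follows the paper's argument: you specialize Theorem~\ref{thm:strongly-unstable-2subset} to $\mathbb Z_n$ via $m=n/d$ and $q=d$, so that $m\ge3$ becomes $d\le n/3$ and the three alternatives carry over verbatim. Your explicit treatment of $n\in\{2,3,4,5,7\}$ is a small but useful addition. There $\lceil n/|A^{-1}A|\rceil=\lfloor n/2\rfloor$, so both sides of the equivalence are false, and the paper's one-line proof leaves this case implicit because the theorem assumes $|G|\notin\{2,3,4,5,7\}$.
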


\begin{proof}
Since $\operatorname{ord}(a-b)=n/d$ and
$|\mathbb Z_n:\langle a-b\rangle|=d$, the result follows
from Theorem~\ref{thm:strongly-unstable-2subset}.
\end{proof}

\section{Classification of finite 2-index-stable groups}

For a finite group $G$, put
\[
\omega(G)=\{\operatorname{ord}(g):g\in G\},
\quad
\pi(G)=\{p\text{ prime}:p\mid |G|\},
\]
and
\[
\mathcal S=\{G\text{ finite}:\omega(G)\subseteq\{1,2,3,4,5,7\}\}.
\]
By Theorem~\ref{thm:2subset-stability}, a finite group $G$ is
$2$-index-stable if and only if $G\in\mathcal S$.

Moreover, the class $\mathcal S$ is closed under taking subgroups
and quotients. Indeed, if $H\le G$, then
$\omega(H)\subseteq\omega(G)$; and if $N\trianglelefteq G$,
the order of $gN$ divides the order of $g$.

For the solvable case, we use Higman's theorem on finite solvable
groups whose elements have prime-power orders
\cite[Theorem~1]{Higman}.

For the nonsolvable case, we use
\cite[Theorem~1.7]{Cameron-Maslova}.
Recall that the Gruenberg--Kegel graph of $G$ has vertex set
$\pi(G)$, with distinct primes $p,q$ adjacent if and only if
$G$ has an element of order $pq$.
Here $O_2(G)$ denotes the largest normal $2$-subgroup of $G$.

\begin{theorem}\label{thm:structure-2-index-stable}
A nontrivial finite group $G$ is $2$-index-stable if and only if one of the
following holds:
\begin{enumerate}
\item $G$ is a $p$-group satisfying
\[
\exp(G)\le4
\]
for $p=2$, or
\[
\exp(G)=p
\]
for $p\in\{3,5,7\}$;

\item $G$ is a Frobenius group
\[
G=P\rtimes H,
\]
where $P$ is a $p$-group satisfying the exponent condition in \textup{(i)},
\[
H\in\{C_2,C_3,C_4,C_5,C_7,Q_8\},
\]
$(p,|H|)=1$, and the action of $H$ on $P$ is fixed-point-free;

\item $G$ is a $2$-Frobenius group with a normal series
\[
1\mathrel{\trianglelefteq}P
\mathrel{\trianglelefteq}M
\mathrel{\trianglelefteq}G
\]
such that $P$ is a $p$-group, $M$ is a Frobenius group with kernel $P$,
and $G/P$ is a Frobenius group with kernel $M/P$, with
\[
M/P\cong C_q,
\quad
G/M\cong C_{p^a},
\]
where
\[
(p^a,q)\in\{(2,3),(2,5),(4,5),(2,7),(3,7)\},
\]
and a Sylow $p$-subgroup $S$ of $G$ satisfies
\[
\exp(S)\le4
\]
for $p=2$, or
\[
\exp(S)=3
\]
for $p=3$;

\item
$G\cong A_6$ or $G\cong\operatorname{PSL}(2,7)$;

\item
$G/O_2(G)\cong A_5\cong\operatorname{PSL}(2,4)$,
and, if $O_2(G)\ne1$, then $O_2(G)$ is a direct product of minimal normal
subgroups of $G$, each of order $16$ and, as an $A_5$-module, isomorphic
to the natural $GF(4)$-module for $\operatorname{PSL}(2,4)$;

\item
$G\cong\operatorname{PSL}(3,4)$.
\end{enumerate}
\end{theorem}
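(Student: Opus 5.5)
By Theorem~\ref{thm:2subset-stability}, $G$ is $2$-index-stable if and only if $G\in\mathcal S$, that is, $\omega(G)\subseteq\{1,2,3,4,5,7\}$. So I will classify $\mathcal S$. Every element of a group in $\mathcal S$ has prime-power order, and the Gruenberg--Kegel graph of $G$ is edgeless. I will split into the solvable and the nonsolvable case. In each case I will check that every listed group lies in $\mathcal S$ and that every group in $\mathcal S$ occurs in the list.

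\emph{Solvable case.} Higman's theorem \cite[Theorem~1]{Higman} says a solvable group with all element orders prime powers has $|\pi(G)|\le2$. There are three possibilities:
\begin{itemize}
\item $G$ is a $p$-group;
\item $G$ is a Frobenius group $P\rtimes H$ with $P$ a $p$-group and $H$ a $q$-group;
\item $G$ is $2$-Frobenius of the shape in (iii), with $M/P$ cyclic of order $q^b$ and $G/M$ cyclic of order $p^a$ dividing $q^b-1$.
\end{itemize}

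In the $p$-group case, the condition $\omega(G)\subseteq\{1,2,3,4,5,7\}$ translates directly into the exponent conditions of (i).

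In the Frobenius case, $P$ must satisfy the same exponent conditions. The complement $H$ is a $q$-group with $\exp(H)\in\{2,3,4,5,7\}$. Frobenius complements have cyclic or generalized quaternion Sylow subgroups. With $\exp(H)\le4$ or prime, this forces $H\in\{C_2,C_3,C_4,C_5,C_7,Q_8\}$. Conversely, in a Frobenius group every element lies in the kernel or in a conjugate of the complement, so these groups are in $\mathcal S$.

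In the $2$-Frobenius case, $M/P\cong C_q$ with $q\in\{3,5,7\}$, since $q^b\in\omega(G)$ forces $b=1$ and $q$ odd. We also need $p^a\mid q-1$ and $p^a\in\{2,3,4\}$. Listing these gives exactly the pairs in (iii). The Sylow $p$-subgroup $S\supseteq P$ must satisfy the exponent condition. Conversely, every element of $G$ lies in $M$ or in a conjugate of $S$, by the standard element-order description for $2$-Frobenius groups. So such $G$ lie in $\mathcal S$.

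\emph{Nonsolvable case.} Here I invoke \cite[Theorem~1.7]{Cameron-Maslova}, the classification of nonsolvable groups with edgeless Gruenberg--Kegel graph. Each group on that list is then filtered by the extra requirement that the element orders lie in $\{1,2,3,4,5,7\}$. Among the simple groups $\operatorname{PSL}(2,q)$, element orders are $p$, the divisors of $(q\pm1)/\gcd(2,q-1)$, and (for $q$ even) $2$. This leaves $q\in\{4,5,7,9\}$, giving $A_5$, $\operatorname{PSL}(2,7)$ and $A_6$. Groups such as $\operatorname{PSL}(2,8)$ (with elements of order $9$) and $\operatorname{PSL}(2,16)$ drop out. $\operatorname{PSL}(3,4)$ has $\omega=\{1,2,3,4,5,7\}$ and survives, while $\operatorname{Sz}(8)$ and $\operatorname{Sz}(32)$ have elements of order $13$ and $25$, respectively, and are excluded. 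The nonsimple entries with $O_2(G)\ne1$ must be handled separately:
\begin{itemize}
\item Over $\operatorname{PSL}(2,7)$ or $A_6$, the extensions in Cameron--Maslova either do not occur or create elements of order $8$. I will verify this from the module structure: the natural $\operatorname{SL}(3,2)$-modules give elements of order $8$.
\item Over $A_5$, the condition that $O_2(G)$ is a sum of natural $GF(4)$-modules guarantees fixed-point-freeness of elements of order $3$ and $5$. Elements of order $2$ of $A_5$ act with square $1$ on the elementary abelian module, so orders stay at most $4$. This gives (v).
\end{itemize}

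The main obstacle is this last step: extracting precisely the $\mathcal S$-members from Cameron--Maslova. It requires case-by-case element-order checks of the extensions, especially ruling out elements of order $8$ or $6$ in extensions of $\operatorname{PSL}(2,7)$ and $A_6$, and confirming that the $A_5$-extensions have exponent dividing $60$ with $2$-part $4$. I would first tabulate $\omega$ for each simple group on the Cameron--Maslova list, using ATLAS data, and only then treat the extensions.
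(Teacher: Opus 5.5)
Your proposal is correct and follows essentially the paper's route: reduce to $\omega(G)\subseteq\{1,2,3,4,5,7\}$, use Higman's theorem for solvable $G$, and filter the Cameron--Maslova list for nonsolvable $G$. For the solvable case you quote Higman directly in Frobenius/$2$-Frobenius form, whereas the paper derives that form from Higman's three quotient types via Schur--Zassenhaus and fixed-point-freeness.

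The ``main obstacle'' you anticipate does not arise, since that list contains no proper extensions of $\operatorname{PSL}(2,7)$ or $A_6$: its case~(3) consists only of $A_6$, $\operatorname{PSL}(2,7)$, $\operatorname{PSL}(2,17)$ and $M_{10}$, and you should discard $M_{10}$ explicitly because it has elements of order $8$. Incidentally, your remark on natural $\operatorname{SL}(3,2)$-modules is inaccurate: $2^3{:}\operatorname{SL}(3,2)$ has no elements of order $8$, and it fails instead because elements of order $3$ fix nonzero vectors, giving elements of order $6$.
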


\begin{proof}
By the element-order criterion above, it suffices to classify
the groups in $\mathcal S$. We treat solvable groups in
Steps~1--3 and nonsolvable groups in Step~4, proving both
implications in each case.

\smallskip
\noindent\textbf{Step 1: the solvable case and Higman's theorem.}
Let $G$ be solvable.

If $G$ is a $p$-group, then all its element orders are powers of $p$.
Thus $G\in\mathcal S$ if and only if either $p=2$ and
$\exp(G)\le4$, or $p\in\{3,5,7\}$ and $\exp(G)=p$.
This is exactly condition \textup{(i)}.

Suppose now that $G\in\mathcal S$ is solvable and $|G|$ is
divisible by at least two distinct primes.
By \cite[Theorem~1]{Higman}, there is a prime $p$
such that $P=O_p(G)$, the largest normal $p$-subgroup
of $G$, is nontrivial and proper, and $G/P$ is either
($\alpha$) cyclic of prime-power order coprime to $p$;
($\beta$) a generalized quaternion group, with $p$ odd; or
($\gamma$) a group of order $p^a q^b$ with cyclic Sylow subgroups,
where $q=kp^a+1$ is prime and $a,b,k\ge1$.
Since $P,G/P\in\mathcal S$, we have $p\in\{2,3,5,7\}$,
and $P$ satisfies the exponent condition in \textup{(i)}.
We consider the first two cases in Step~2 and the third in Step~3.

\smallskip
\noindent\textbf{Step 2: the Frobenius case.}
If $G/P$ is cyclic of prime-power order, then $G/P\in\mathcal S$ forces
\[
G/P\cong C_2,\ C_3,\ C_4,\ C_5,\ \text{or }C_7.
\]
If $G/P$ is generalized quaternion, then it is a $2$-group in $\mathcal S$;
a generalized quaternion group of order greater than $8$ contains an element
of order $8$, hence $G/P\cong Q_8$ (and $p$ is odd). In both cases
$G/P\cong H$ with $H\in\{C_2,C_3,C_4,C_5,C_7,Q_8\}$ and $(p,|H|)=1$. Thus $P$
is a normal Hall subgroup, and by the Schur--Zassenhaus theorem
\[
G=P\rtimes H.
\]
Let $1\ne h\in H$. If $h$ centralized some $1\ne y\in P$, then $yh$ would be
an element of order divisible by two distinct primes, contradicting
$G\in\mathcal S$. Hence $C_P(h)=1$ for every $1\ne h\in H$, i.e.\ the action is
fixed-point-free and $G$ is a Frobenius group with kernel $P$ and complement
$H$. This gives \textup{(ii)}.

Conversely, let $G=P\rtimes H$ be as in \textup{(ii)}. Every element of $G$
lies in $P$ or in a conjugate of the Frobenius complement $H$, so
$\omega(G)=\omega(P)\cup\omega(H)\subseteq\{1,2,3,4,5,7\}$ by the exponent
condition on $P$ and the list of $H$.

\smallskip
\noindent\textbf{Step 3: the $2$-Frobenius case.}
Here $|G/P|=p^aq^b$, the Sylow subgroups of $G/P$ are cyclic, and
$q=kp^a+1$; in particular $q$ is odd. Since $G/P\in\mathcal S$, its cyclic
Sylow $q$-subgroup has order $q^b\in\{3,5,7\}$, so $b=1$, and its cyclic
Sylow $p$-subgroup has order $p^a\in\{2,3,4,5,7\}$. As $p\ne q$,
$p,q\in\{2,3,5,7\}$ and $p^a\mid q-1$, the only possibilities are
\[
(p^a,q)\in\{(2,3),(2,5),(4,5),(2,7),(3,7)\}.
\]
Since $q>p^a$, the number of Sylow $q$-subgroups of $G/P$ divides $p^a$ and is
congruent to $1$ modulo $q$, hence equals $1$. Let $M/P$ be the normal Sylow
$q$-subgroup of $G/P$; then $M/P\cong C_q$ and $G/M\cong C_{p^a}$. By the
Schur--Zassenhaus theorem $M=P\rtimes C_q$, and, as in Step~2, the action of
$C_q$ on $P$ is fixed-point-free, so $M$ is a Frobenius group with kernel
$P$.
Again by the Schur--Zassenhaus theorem, $G/P$ has a complement
to $M/P$, isomorphic to $C_{p^a}$.
No nonidentity element of this complement can centralize a
nonidentity element of $M/P$ in $G/P$
(it would produce an element of order divisible by $pq$),
so $G/P$ is a Frobenius group with kernel $M/P$. Thus $G$
is a $2$-Frobenius group. If $S$ is a Sylow $p$-subgroup of $G$, then
$S\cap M=P$, so $S/P\cong G/M\cong C_{p^a}$, and the exponent condition on
$S$ follows since $S\le G$ and $G\in\mathcal S$. This gives \textup{(iii)}.

Conversely, let $G$ be as in \textup{(iii)}.
Elements of $P$ and elements of $G\setminus M$ are $p$-elements:
if $x\in G\setminus M$, then $xP$ lies outside the Frobenius kernel $M/P$ of $G/P$
and is therefore conjugate to an element of a complement of order $p^a$. Hence
$\langle x\rangle P$ is a $p$-group.
By Sylow's theorem, their orders divide $\exp(S)$, so they lie
in $\{1,2,4\}$ ($p=2$) or $\{1,3\}$ ($p=3$).
Elements of $M\setminus P$ lie in
conjugates of a Frobenius complement of $M$, of order $q\in\{3,5,7\}$. Hence
$\omega(G)\subseteq\{1,2,3,4,5,7\}$.

Steps~1--3 show that a solvable group is $2$-index-stable if and only if it is
one of the groups \textup{(i)}--\textup{(iii)}.

\smallskip
\noindent\textbf{Step 4: the nonsolvable case.}
Let $G\in\mathcal S$ be nonsolvable. Its Gruenberg--Kegel graph
has no edges, since $G$ has no element of order $pq$ for
distinct primes $p,q$. Moreover, $\pi(G)\subseteq\{2,3,5,7\}$.
By \cite[Theorem~1.7]{Cameron-Maslova}, $G$ satisfies one
of cases \textup{(3)}--\textup{(6)} of that theorem.

In case \textup{(3)} of that theorem, $\operatorname{PSL}(2,17)$ is excluded
because $17\in\pi(G)$ (its order is $2^4\cdot3^2\cdot17$), and $M_{10}$ is
excluded because $\omega(M_{10})=\{1,2,3,4,5,8\}$; this leaves $A_6$ and
$\operatorname{PSL}(2,7)$.

In case \textup{(4)}, the possibility $G/O_2(G)\cong\operatorname{PSL}(2,8)$ is excluded,
since this quotient would belong to $\mathcal S$, whereas
$\omega(\operatorname{PSL}(2,8))=\{1,2,3,7,9\}$.
Hence $G/O_2(G)\cong\operatorname{PSL}(2,4)\cong A_5$,
with the module structure of $O_2(G)$ described in \textup{(v)}.

Case \textup{(5)} gives $\operatorname{PSL}(3,4)$. Case \textup{(6)} is
impossible, since $13\mid|Sz(8)|=2^6\cdot5\cdot7\cdot13$ and
$31\mid|Sz(32)|=2^{10}\cdot5^2\cdot31\cdot41$, so these quotients (and hence $G$)
would have prime divisors outside $\{2,3,5,7\}$.

This yields \textup{(iv)}, \textup{(v)}, \textup{(vi)} of the theorem.

Conversely,
\[
\omega(A_6)=\{1,2,3,4,5\},\quad
\omega(\operatorname{PSL}(2,7))=\{1,2,3,4,7\},\quad\text{and}\quad
\omega(\operatorname{PSL}(3,4))=\{1,2,3,4,5,7\},
\]
so these groups lie in $\mathcal S$.
Let $G$ be as in \textup{(v)}.
By \cite[Theorem~1.7]{Cameron-Maslova}, the Gruenberg--Kegel graph of $G$ has no edges,
and $O_2(G)$ is elementary abelian.
If $x\in G$ has $2$-power order, then its image in
$G/O_2(G)\cong A_5$ has order at most $2$, so $x^2\in O_2(G)$ and $x^4=1$.
If $x$ has odd order, then $\langle x\rangle\cap O_2(G)=1$, so the order of
$x$ equals that of its image in $A_5$, namely $1$, $3$, or $5$.
Since no element order is divisible by two primes, $\omega(G)\subseteq\{1,2,3,4,5\}$, and
$G\in\mathcal S$.

Combining Steps~1--4: a solvable $G$ is $2$-index-stable exactly when it is
in \textup{(i)}--\textup{(iii)}, and a nonsolvable $G$ exactly when it is in
\textup{(iv)}--\textup{(vi)}.
This proves the theorem.
\end{proof}

For example, $S_3$, the dihedral group of order $10$, $A_4$ and $C_3^2\rtimes Q_8$,
with the fixed-point-free action of $Q_8$ on $C_3^2$, are Frobenius
groups belonging to \textup{(ii)}, the group $S_4$ is a $2$-Frobenius group
belonging to \textup{(iii)}, and $A_5$ belongs to \textup{(v)} with
$O_2(G)=1$.
The dihedral group of order $8$ belongs to \textup{(i)}.
Another example of \textup{(v)} is
$GF(4)^2\rtimes\operatorname{SL}(2,4)$,
with the natural action, a group of order $960$.

\end{document}